\DeclareMathOperator{\Diff}{Diff} 
\DeclareMathOperator{\dist}{dist} \DeclareMathOperator{\D}{D}
\DeclareMathOperator{\Id}{Id}
\newtheorem{lem}{Lemma }
\newtheorem{stat}{Statement }
\newtheorem{thm}{Theorem }
\newtheorem{conj}{Conjecture }
\theoremstyle{definition}
\newtheorem{defin}{Definition }
\newtheorem{rem}{Remark }
\newcommand{\ep}{\varepsilon}
\newcommand{\lam}{\lambda}
\newcommand{\ZZ}{\mathds{Z}}
\newcommand{\RR}{\mathds{R}}
\newcommand{\sref}[1]{(\ref{#1})}
\begin{document}

\begin{center}{\large\bf Partial hyperbolicity and central shadowing}
\bigskip

SERGEY KRYZHEVICH AND SERGEY TIKHOMIROV

\end{center}

\textbf{Abstract.} We study shadowing property for a partially
hyperbolic diffeomorphism $f$. It is proved that if $f$ is
dynamically coherent then any pseudotrajectory  can be shadowed by a
pseudotrajectory with ``jumps'' along the central foliation. The proof
is based on the Tikhonov-Shauder fixed point theorem.

\textbf{Keywords:} partial hyperbolicity, central foliation, Lipschitz shadowing, dynamical coherence.

\section{Introduction}

The theory of shadowing of approximate trajectories
(pseudotrajectories) of dynamical systems is now a well developed
part of the global theory of dynamical systems (see, for example,
monographs \cite{Palmer1}, \cite{Pilyugin}). This theory is of special importance for numerical simulations
and the classical theory of structural stability.

It is well known that a diffeomorphism has the shadowing property in
a neighborhood of a hyperbolic set \cite{Anosov}, \cite{Bowen} and a
structurally stable diffeomorphism has the shadowing property on the
whole manifold \cite{Morimoto}, \cite{Robinson}, \cite{Sawada}.

There are a lot of examples of non-hyperbolic diffeomorphisms, which have shadowing property (see for instance \cite{PilDCDS}, \cite{HolSh})
at the same time this phenomena is not frequent. More precisely the following statements are correct.
Diffeomophisms with $C^1$-robust shadowing property are structurally stable \cite{Sak}.
In \cite{AbdDiaz} Abdenur and Diaz conjectured that $C^1$-generically shadowing is equivalent to structural
stability, and proved this statement for so-called tame diffeomorphisms. Lipschitz shadowing is equivalent to
structural stability \cite{PilTikh} (see \cite{HolSh} for some generalizations).

In present article we study shadowing property for partially hyperbolic diffeomorphisms. Note that due to \cite{Diaz}
one cannot expect that in general shadowing holds for partially hyperbolic diffeomorphisms. We use notion of
central pseudotrajectory and prove that any pseudotrajectory of a partially hyperbolic diffeomorphism can be shadowed by a central pseudotrajectory. This result might be considered as a generalization of a classical shadowing lemma for the case of partially hyperbolic diffeomorphisms.

\section{Definitions and the main result}

Let $M$ be a compact $n$~-- dimensional $C^{\infty}$ smooth manifold,
with a Riemannian metric $\dist$. Let $|\cdot|$ be the Euclidean norm at
${\mathbb R}^n$ and the induced norm on the leaves of the tangent
bundle $TM$.
For any $x\in M$, $\varepsilon>0$ we denote
$$B_\varepsilon(x)=\{y\in M: \dist(x,y)\le \varepsilon\}.$$

Below in the text we use the following definition of partial hyperbolicity (see for example \cite{kewi}).
\begin{defin} A diffeomorphism $f\in \Diff^1(M)$ is called \emph{partially
hyperbolic} if there exists $m\in {\mathbb N}$ such that the mapping
$f^m$ satisfies the following property. There exists a continuous invariant
bundle
$$T_x M=E^s(x) \oplus E^c(x) \oplus E^u(x), \qquad x\in M$$
and continuous positive functions $\nu,\hat\nu, \gamma, \hat\gamma : M \to \RR$ such that
$$\nu,\hat\nu<1, \qquad \nu<\gamma<\hat\gamma<\hat\nu^{-1}
$$
and for all $x\in M$, $v\in{\mathbb R}^n$, $|v|=1$
\begin{equation}\label{Add2.1}
\begin{array}{c}
  |Df^m(x)v|\le \nu(x), \quad v\in E^s(x);\\
\gamma (x)\le |Df^m(x)v|\le \hat\gamma(x), \quad v\in E^c(x);\\
|Df^m(x)v| \geq \hat\nu^{-1}(x), \quad v\in E^u(x).
\end{array}
\end{equation}
\end{defin}

Denote
$$
E^{cs}(x)= E^s(x) \oplus  E^{c}(x), \qquad E^{cu}(x)=E^{c}(x)\oplus E^u(x).
$$

For further considerations we need the notion of dynamical coherence.
\begin{defin} We say that a $k$~-- dimensional distribution $E$ over $TM$ is
\emph{uniquely integrable} if there exists a $k$~-- dimensional continuous
foliation $W$ of the manifold $M$, whose leaves are tangent to $E$
at every point. Also, any $C^1$~-- smooth path tangent to $E$ is
embedded to a unique leaf of $W$.
\end{defin}

\begin{defin} A partially hyperbolic diffeomorphism $f$ is
\emph{dynamically coherent} if both the distributions $E^{cs}$ and
$E^{cu}$ are uniquely integrable.
\end{defin}

If $f$ is dynamically coherent then distribution $E^c$ is also uniquely integrable and
corresponding foliation $W^c$ is a subfoliation of both $W^{cs}$ and $W^{cu}$. For a discussion how often partially hyperbolic
diffeomorphisms are dynamically coherent see \cite{Brin}, \cite{PHSurvey}.

In the text below we always assume that $f$ is dynamically coherent.

For $\tau\in \{s,c,u,cs,cu\}$ and $y \in W^{\tau}(x)$ let $\dist_{\tau}(x,y)$ be the inner distance on $W^{\tau}(x)$ from $x$ to $y$. Note that
\begin{equation}\label{eq1}
\dist(x, y) \leq \dist_{\tau}(x, y), \quad y \in W^{\tau}(x).
\end{equation}
Denote
$$
W^{\tau}_\varepsilon(x) = \{y \in W^{\tau}(x), \; \dist_{\tau}(x, y) < \ep\}.
$$

Let us recall the definition of the shadowing property.

\begin{defin} A sequence $\{x_k:k\in {\mathbb Z}\}$ is called $d$~-
\emph{pseudotrajectory} ($d>0$) if $\dist(f(x_k),x_{k+1}) \leq d$ for
all $k\in {\mathbb Z}$.
\end{defin}

\begin{defin}
Diffeomorphism $f$ satisfies the
\emph{shadowing property} if for any $\ep > 0$ there exists $d> 0$ such that for any $d$-pseudotrajectory $\{x_k: k\in {\mathbb Z}\}$ there exists a
trajectory $\{y_k\}$ of the diffeomorphism $f$ such that
\begin{equation}\label{Add3.1}
\dist(x_k,y_k)\le \ep, \quad k\in {\mathbb
Z}.
\end{equation}
\end{defin}

\begin{defin} Diffeomorphism $f$ satisfies the
\emph{Lipschitz shadowing property} if there exist ${\cal L},
d_0>0$ such that for any $d\in (0, d_0)$, and any $d$-pseudotrajectory $\{x_k: k\in {\mathbb Z}\}$ there exists a
trajectory $\{y_k\}$ of the diffeomorphism $f$, satisfying \sref{Add3.1} with $\ep = {\cal L}d$.
\end{defin}

As was mentioned before in a neighborhood of a hyperbolic set diffeomorphism satisfies the Lipschitz shadowing property \cite{Anosov}, \cite{Bowen}, \cite{Pilyugin}.

We suggest the following generalization of the shadowing property for partially hyperbolic dynamically coherent diffeomorphisms.

\begin{defin}[see for example \cite{hps}] An $\ep$-pseudotrajectory $\{y_k\}$ is called \emph{central} if
for any $k\in {\mathbb Z}$ the inclusion $f(y_k)\in W^c_{\ep}(y_{k+1})$ holds (see Fig.~\ref{CentralPst}).
\end{defin}

\begin{figure}[ht]
        \centering
        \includegraphics[width = 0.5\textwidth]{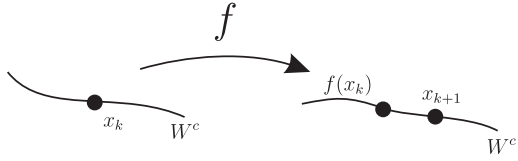}
        \caption{Central pseudotrajectory}
        \label{CentralPst}
\end{figure}

\begin{defin}\label{defCenSh}
A partially hyperbolic dynamically coherent diffeomorphism $f$ satisfies the
\emph{central shadowing property} if for any $\ep > 0$ there exists $d> 0$ such that for any $d$-pseudotrajectory $\{x_k: k\in {\mathbb Z}\}$ there exists an
$\ep$-central pseudotrajectory $\{y_k\}$ of the diffeomorphism $f$, satisfying \sref{Add3.1}.
\end{defin}

\begin{defin}\label{defLipCenSh}
A partially hyperbolic dynamically coherent diffeomorphism
$f$ satisfies the \emph{Lipschitz central shadowing property} if
there exist $d_0, {\cal L}>0$ such that for any $d \in (0, d_0)$
and any $d$-pseudotrajectory $\{x_k: k\in {\mathbb Z}\}$ there
exists an $\ep$-central pseudotrajectory $\{y_k\}$, satisfying
\sref{Add3.1} with $\ep = {\cal L}d$.
\end{defin}

Note that the Lipschitz central shadowing property implies the central shadowing property.

We prove the following analogue of the shadowing lemma for partially hyperbolic
diffeomorphisms.

\begin{thm}\label{thm1} Let diffeomorphism  $f\in C^1$ be partially
hyperbolic and dynamically coherent. Then $f$ satisfies the
Lipschitz central shadowing property.
\end{thm}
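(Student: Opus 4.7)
The plan is to realize the shadowing central pseudotrajectory as a fixed point of a continuous self-map on a compact convex subset of a product space, as suggested by the abstract's reference to the Tikhonov--Schauder theorem. Before setting up the operator, I would reduce to the case in which the bounds of \sref{Add2.1} hold for $Df$ itself rather than for $Df^m$: once the Lipschitz central shadowing property is proved for $f^m$, it follows for $f$ by shadowing each of the $m$ sub-blocks separately and using dynamical coherence of $W^c$ to glue the centre jumps. After renaming constants, I may therefore assume uniform bounds $\nu,\hat\nu<1<\gamma,\hat\gamma$ directly on $Df$.

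For every $k\in\ZZ$ I would fix the exponential chart $\psi_k=\exp_{x_k}^{-1}$ on a ball $B_{r_0}(x_k)$, with radius $r_0>0$ uniform in $k$ by compactness of $M$, and with orthonormal coordinates on $T_{x_k}M$ aligned with the splitting $E^s(x_k)\oplus E^c(x_k)\oplus E^u(x_k)$. Pulling back the coherent foliations $W^{cs}$ and $W^{cu}$ through $\psi_k$ yields continuous local foliations tangent at $0$ to $E^{cs}(x_k)$ and $E^{cu}(x_k)$ respectively; by uniform transversality they admit on a smaller ball $B_{r_1}(x_k)$ two holonomy projections $\pi^s_k$ and $\pi^u_k$ onto the factors $E^s(x_k)$ and $E^u(x_k)$. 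The identity $W^c=W^{cs}\cap W^{cu}$ then translates into the key criterion: two nearby points $y,z$ lie on a common local centre leaf if and only if $\pi^s_k(y)=\pi^s_k(z)$ and $\pi^u_k(y)=\pi^u_k(z)$.

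Now let $K$ be the set of sequences $(y_k)_{k\in\ZZ}$ with $|\psi_k(y_k)|\le\mathcal{L}d$, with $\mathcal{L}$ to be fixed later; this is a compact convex subset of $\prod_k T_{x_k}M$ in the product topology. Define the operator $F:K\to K$ by $F((y_k))=(y'_k)$, where $y'_k$ is the unique point in $B_{r_1}(x_k)$ with $\pi^s_k(y'_k)=\pi^s_k(f(y_{k-1}))$, $\pi^u_k(y'_k)=\pi^u_k(f^{-1}(y_{k+1}))$, and zero centre coordinate. Using the hyperbolic bounds together with $|\psi_{k+1}(f(x_k))|\le d$, one obtains a linear recursion of the form $|\pi^s_k(f(y_{k-1}))|\le \nu|\pi^s_{k-1}(y_{k-1})|+Cd$ and a symmetric estimate for the unstable factor, whose geometric summation yields $|\psi_k(y'_k)|\le\mathcal{L}d$ as soon as $\mathcal{L}\ge 2C/(1-\max(\nu,\hat\nu))$. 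Continuity of $F$ in the product topology is immediate because the $k$-th entry of $F$ depends only on $y_{k-1}$ and $y_{k+1}$. Tikhonov--Schauder then provides a fixed point $(y^*_k)$, and the criterion from the previous paragraph shows that $f(y^*_k)$ and $y^*_{k+1}$ share their stable and unstable coordinates in the chart at $x_{k+1}$, so that $f(y^*_k)\in W^c_{\mathcal{L}d}(y^*_{k+1})$ and $\dist(x_k,y^*_k)\le\mathcal{L}d$, which is exactly the Lipschitz central shadowing property.

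The main obstacle will be the rigorous construction of the holonomy projections $\pi^s_k,\pi^u_k$ and the verification that $F$ maps $K$ into itself with the stated linear recursion. This requires that the foliations $W^{cs}$ and $W^{cu}$ depend continuously on the base point with a uniform modulus of transversality and a uniform leaf size on $B_{r_0}(x_k)$; combined with the $C^1$ bound on $f$ and the pseudotrajectory estimate $|\psi_{k+1}(f(x_k))|\le d$, this is what produces the $Cd$ term in the recursion and ensures that the nonlinear correction to the linearisation of $\pi^s_{k+1}\circ f$ by $\nu\,\pi^s_k$ stays bounded by a multiple of $d$. Once these Lipschitz and transversality facts are in hand the remainder of the argument is formal, and the final constant $\mathcal{L}$ depends only on $\nu,\hat\nu$, the $C^1$ norm of $f$, and the geometric data of $W^{cs},W^{cu}$.
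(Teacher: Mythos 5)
Your overall strategy is the same as the paper's: realize the shadowing points as a Tikhonov--Schauder fixed point on the compact convex product of small balls in the charts at the $x_k$, using the invariant foliations $W^{cs},W^{cu}$ to define the operator. You package it as a single two-sided operator (stable coordinate prescribed by $f(y_{k-1})$, unstable coordinate by $f^{-1}(y_{k+1})$, centre coordinate set to zero), whereas the paper runs two separate one-sided fixed-point problems (a sequence $y^s_k\in W^s(x_k)$ with jumps along $W^{cu}$ and a sequence $y^u_k\in W^u(x_k)$ with jumps along $W^{cs}$) and then intersects $W^{cu}(y^s_k)$ with $W^s(y^u_k)$; that structural difference is harmless. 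The substantive problem is your key recursion.

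You claim $|\pi^s_k(f(y_{k-1}))|\le \nu|\pi^s_{k-1}(y_{k-1})|+Cd$, so that a geometric summation closes once $\mathcal{L}\ge 2C/(1-\max(\nu,\hat\nu))$, and you assert that the correction to the linearisation ``stays bounded by a multiple of $d$''. This is not justified, and as stated it fails. The projection $\pi^s_k$ is a holonomy along the foliation $W^{cu}$, which is only continuous: the local product structure (Statement \ref{st1}) comes with a multiplicative constant $L_0>1$ (the intersection point of $W^s_{\ep}(x)$ and $W^{cu}_{\ep}(y)$ is only within $L_0\dist(x,y)$ of $x$), and there is no reason for the $W^{cu}$-holonomy between two nearby stable transversals to be Lipschitz with constant near $1$ (such holonomies are in general merely H\"older). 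Consequently the honest estimate is of the form $|\pi^s_k(f(y_{k-1}))|\le L_0\bigl(\nu|\pi^s_{k-1}(y_{k-1})|+d\bigr)$ up to $(1+\mu)$ factors: the error is a multiple of the stable coordinate itself, with factor $L_0$, not an additive $O(d)$ term. Since $L_0\nu$ may well exceed $1$, your operator $F$ need not map $K$ into itself and the fixed-point argument collapses. The missing idea is precisely the paper's second reduction: besides replacing $f$ by $f^m$ to get the bounds on $Df$ (which is all your preliminary reduction achieves), one must pass to a further iterate $f^l$ with $\lambda^l>2L_0$, $\lambda=\min_x\min(\nu^{-1},\hat\nu^{-1})$, so that the contraction dominates the product-structure constant; then the recursion closes with factor $L_0/\lambda^l<1/2$ and your scheme goes through. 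A smaller inaccuracy: the fixed-point equations give that $y^*_{k+1}$ and $f(y^*_k)$ share the stable coordinate at $x_{k+1}$, while the unstable coordinates are shared between $y^*_k$ and $f^{-1}(y^*_{k+1})$ at $x_k$; to conclude $f(y^*_k)\in W^c_{\mathcal{L}d}(y^*_{k+1})$ you must push the latter forward by $f$, using $f$-invariance of $W^{cs}$, a bound on $\|Df\|$, and an estimate of the type of condition \textbf{A4} of Statement \ref{st2} to control the inner centre distance --- easy, but it should be said.
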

Note that for Anosov diffeomorphisms any central pseudotrajectory is
a true trajectory.

Let us also mention the following related notion \cite{hps}.

\begin{defin}
Partially hyperbolic, dynamically coherent diffeomorphism $f$ is called \emph{plaque expansive} if there exists $\ep >0$ such that for any
$\ep$-central pseudotrajectories $\{y_k\}$, $\{z_k\}$, satisfying
$$
\dist(y_k, z_k) < \ep, \quad k \in {\mathbb Z}
$$
hold inclusions
$$
z_0 \in W^c_{\ep}(y_0), \quad k \in {\mathbb Z}.
$$
\end{defin}
In the theory of partially hyperbolic diffeomorphisms the following conjecture plays important role \cite{bdv}, \cite{hps}.
\begin{conj}[Plague Expansivity Conjecture]
Any partially hyperbolic, dynamically coherent diffeomorphism is plaque expansive.
\end{conj}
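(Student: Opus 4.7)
The strategy is to show that if $\{y_k\}, \{z_k\}$ are two $\ep$-central pseudotrajectories with $\dist(y_k, z_k) < \ep$ for all $k \in \ZZ$, then the components of the displacement from $y_k$ to $z_k$ transverse to $E^c$ (i.e.\ along $E^s$ and $E^u$) must vanish at $k = 0$, leaving only a component along $W^c$ and yielding $z_0 \in W^c_\ep(y_0)$. For $\ep$ sufficiently small I would fix local product charts near each $y_k$ supplied by the three foliations $W^s, W^c, W^u$ (the existence of $W^c$ follows from dynamical coherence) and parameterize $z_k$ by three scalar sizes $\sigma^s_k, \sigma^c_k, \sigma^u_k$ measuring its displacement from $y_k$ along $W^s, W^c, W^u$ respectively.

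Next, the central-pseudotrajectory condition $f(y_k) \in W^c_\ep(y_{k+1})$ (and the analogous one for $z$) says that the jumps from $f(y_k)$ to $y_{k+1}$ and from $f(z_k)$ to $z_{k+1}$ lie along center leaves, hence affect only the center coordinate. Combined with the $Df$-invariance of $E^s, E^c, E^u$ and the hyperbolicity estimates \sref{Add2.1} (applied to a suitable iterate $f^m$), this should produce approximate recursions
\begin{equation*}
\sigma^s_{k+1} \leq \nu(y_k)\, \sigma^s_k + E_k, \qquad \sigma^u_{k+1} \geq \hat\nu^{-1}(y_k)\, \sigma^u_k - E_k,
\end{equation*}
where $E_k$ collects a nonlinear contribution from $f$ and a holonomy contribution from comparing the two center jumps. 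Under a bound $E_k \leq C\ep(\sigma^s_k + \sigma^u_k)$ with $C$ small, the uniform a priori bound $\sigma^s_k, \sigma^u_k \leq \ep$ rules out $\sigma^s_0 > 0$ (backward iteration would force $\sigma^s_{-k} \to \infty$) and $\sigma^u_0 > 0$ (forward iteration would force $\sigma^u_k \to \infty$). Hence $z_0 \in W^{cs}_\ep(y_0) \cap W^{cu}_\ep(y_0)$, which locally equals $W^c_\ep(y_0)$ by dynamical coherence, concluding the argument.

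The hard part is establishing the bound $E_k \leq C\ep(\sigma^s_k + \sigma^u_k)$ with $C$ independent of $k$ and small in $\ep$. The nonlinear piece is standard, being quadratic in $\sigma$. The holonomy piece is delicate: it requires transporting the center jump based at $f(y_k)$ to one based at $f(z_k)$, which involves holonomies along $W^s$ and $W^u$ across center plaques. In general $W^c$ is merely continuous and its holonomies need not be Lipschitz, so $E_k$ can fail to be small compared to $\sigma^s_k + \sigma^u_k$. This is precisely where the classical approach needs additional input such as $C^1$ regularity of the center foliation or a plaque-family construction in the style of \cite{hps}, and it is why the conjecture remains open in full generality. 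A natural alternative in the present framework would be to establish a uniqueness refinement of Theorem~\ref{thm1}: if the Tikhonov--Schauder fixed point producing a central shadow is shown to be unique up to motion along $W^c$, then applying it with $\{x_k\} = \{y_k\}$ would force $\{z_k\}$ to be a center-translate of $\{y_k\}$ and yield plaque expansivity at once; but such uniqueness again seems to require precisely the Lipschitz control of $W^c$-holonomies that is the core obstruction.
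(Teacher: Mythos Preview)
The statement you were asked about is the \emph{Plaque Expansivity Conjecture}: in the paper it is explicitly labeled and cited as an open problem, and the paper does not contain a proof of it. So there is nothing to compare your argument against; the paper offers no proof.

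Your write-up is in fact consistent with this. You correctly locate the crux: controlling the holonomy error $E_k$ requires Lipschitz control of center holonomies (or some substitute such as a $C^1$ center foliation or an HPS plaque family), and this is exactly what is unavailable in general. Your final remark, that uniqueness of the Tikhonov--Schauder fixed point in Theorem~\ref{thm1} up to center motion would yield plaque expansivity, matches the paper's own Remark~\ref{rem1}, which states that uniqueness of the sequences $\{y_k^s\}$, $\{y_k^u\}$ is equivalent to the conjecture. So your diagnosis of the obstruction is on target; there simply is no proof in the paper to be recovered.
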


Let us note that if the diffeomorphism $f$ in Theorem \ref{thm1} is additionally plaque expansive then leaves $W^c(y_k)$ are uniquely defined (see Remark \ref{rem1} below).

Among results related to Theorem \ref{thm1} we would like to mention  that partially hyperbolic dynamically coherent diffeomorphisms, satisfying plaque expansivity property
are leaf stable (see \cite[Chapter 7]{hps}, \cite{PSW} for details).

\section{Proof of Theorem \ref{thm1}}

In what follows below we will use the following statement, which is
consequence of transversality and continuity of foliations $W^s$, $W^{cu}$.
\begin{stat}{\label{st1}}
There exists $\delta_0 > 0$, $L_0 >1$ such that for any $\delta \in (0, \delta_0]$ such that for any $x, y \in
M$ satisfying $\dist(x, y) < \delta$ there exists unique point $z =
W^{s}_{\ep}(x) \cap W^{cu}_{\ep}(y)$ for $\ep = L_0 \delta$.
\end{stat}

Note that for a fixed diffeomorphism $f$, satisfying the assumptions
of the theorem, it suffices to prove that its fixed power $f^m$ satisfies the Lipschitz central shadowing
property. Since foliations $ W^{\tau}$, $\tau \in\{s, u, c, cs, cu\}$ of $f^m$ coincide
with the corresponding foliations of the initial diffeomorphism $f$ we can assume without loss of generality  that conditions \sref{Add2.1} hold for $m=1$.
Note that a similar claim can be done using adapted metric, see \cite{Gour}.

Denote
$$
\lambda = \min_{x\in M}(\min({\hat\nu}^{-1}(x),\nu^{-1}(x)))>1.
$$
Let us choose $l$ so big that
$$
\lambda^l > 2 L_0.
$$
Arguing similarly to previous paragraph it is sufficient to prove that $f^l$ has the Lipschitz central shadowing property and hence,
we can assume without loss of generality that $l = 1$.

Decreasing $\delta_0$ if necessarily we conclude from inequalities \sref{Add2.1} that
\begin{equation}\label{Add5.1}
\dist_s(f(x), f(y)) \leq \frac{1}{\lam}\dist_s(x, y), \quad y \in W^s_{\delta_0}(x)
\end{equation}
and
\begin{equation}\notag
\dist_u(f(x), f(y)) \geq \lam \dist_u(x, y), \quad y \in W^u_{\delta_0}(x).
\end{equation}

Denote
$$
I_r^{\tau}(x)=\{z^{\tau}\in E^{\tau}(x), \; |z^{\tau}|\le r\}, \quad \tau \in\{s, u, c, cs, cu\}, \quad r > 0,
$$
$$
I_r(x)=\{z\in T_xM, \; |z|\le r\}, \quad r > 0.
$$
Consider standard exponential mappings $\exp_x: T_xM \to M$ and
$\exp_x^{\tau}: T_xW^{\tau}(x) \to W^{\tau}(x)$, for $\tau \in
\{s, c, u, cs, cu\}$.
Standard properties of exponential mappings imply that there exists $\ep_0>0$,
such that for all $x\in M$ maps $\exp_x$, $\exp_x^{\tau}$ are well defined on $I_{\ep_0}(x)$ and $I_{\ep_0}^{\tau}(x)$ respectively and
$\D \exp_x(0) = \Id$, $\D
\exp_x^{\tau}(0) = \Id$. Those equalities imply the following.

\begin{stat}{\label{st2}}
For $\mu > 0$ there exists $\ep \in (0, \ep_0)$ such that for any point $x
\in M$, the following holds.
\begin{itemize}
\item[\textbf{A1}] For any $y, z \in B_{\ep}(x)$ and $v_1, v_2 \in I_{\ep}(x)$ the following inequalities hold
$$
\frac{1}{1 + \mu}\dist(y, z) \leq |\exp^{-1}_x(y) - \exp^{-1}_x(z)|
\leq (1 + \mu)\dist(y, z),
$$
$$
\frac{1}{1 + \mu}|v_1 - v_2| \leq \dist(\exp_x(v_1), \exp_x(v_2))
\leq (1+\mu)|v_1 - v_2|.
$$
\item[\textbf{A2}] Conditions similar to \textbf{A1} hold for
$\exp^{\tau}_x$ and $\dist_\tau$, $\tau \in \{s, c, u, cs, cu\}$.
\item[\textbf{A3}] For $y \in W^{\tau}_{\ep}(x)$, $\tau \in \{s, c, u, cs,
cu\}$ the following holds
$$
\dist_{\tau}(x, y) \leq (1+\mu)\dist(x, y).
$$
\item[\textbf{A4}] If $\xi < \ep$ and $y \in W^{cs}_{\xi}(x) \cap W^{cu}_{\xi}(x)$ then
$$
\dist_{c}(x, y) \leq (1+\mu)\xi.
$$
\end{itemize}
\end{stat}

Consider small enough $\mu \in (0, 1)$ satisfying the following inequality
\begin{equation}\label{eq2}
(1+\mu)^2L_0/\lam < 1.
\end{equation}
Choose corresponding $\ep > 0$ from Statement \ref{st2}. Let $\delta = \min(\delta_0, \ep/L_0)$.

For a pseudotrajectory $\{x_k\}$ consider maps $h^s_k: U_k \subset E^s(x_k) \to
E^s(x_{k+1})$ defined as the following:
$$
h_k^s(z) = (\exp^{s}_{x_{k+1}})^{-1} (p)
$$
where
\begin{equation}\label{Add6.1}
p = W^{cu}_{L_0\delta_0}(f(\exp^s_{x_k}(z))) \cap W^s_{L_0\delta_0}(x_{k+1})
\end{equation}
and $U_k$ is the set of points for which map $h^s_k$ is well-defined (see Fig.~\ref{MapHDef}).
Note that maps $h_k^s(z)$ are continuous. The following lemma plays a central role in the proof of
Theorem~\ref{thm1}.

\begin{figure}[ht]
        \centering
        \includegraphics[width = 0.75\textwidth]{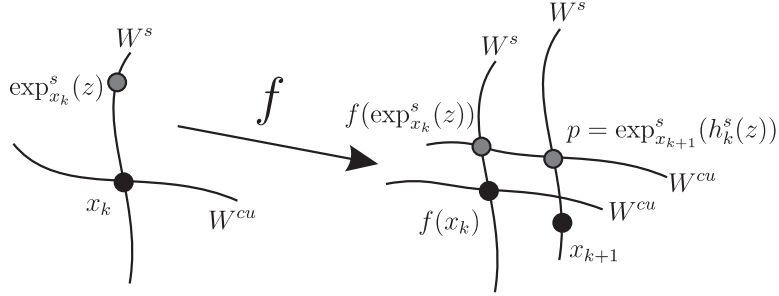}
        \caption{Definition of map $h_k^s$}
        \label{MapHDef}
\end{figure}

\begin{lem}\label{lem1}
There exists $d_0 >0$, $L > 1$ such that for any $d<d_0$ and
$d$-pseudotrajectory $\{x_k\}$ maps $h_k^s$ are well-defined for
$z \in I_{Ld}^s(x_k)$ and the following
inequalities hold
\begin{equation}\label{eq10}
|h_k^s(z)| \leq Ld, \quad k \in \ZZ.
\end{equation}
\end{lem}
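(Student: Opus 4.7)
The plan is to run the definition of $h_k^s$ once through the bounds of Statements \ref{st1} and \ref{st2} and check that the constants line up to give a self-map sending $I^s_{Ld}(x_k)$ into $I^s_{Ld}(x_{k+1})$. I would first fix $\mu$ as already arranged in (\ref{eq2}), then choose $L$ large, and only then take $d_0$ small enough that all the approximation conditions in Statements \ref{st1} and \ref{st2} are met uniformly in $k$.

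For well-definedness, fix $z\in I^s_{Ld}(x_k)$ and set $y_k=\exp^s_{x_k}(z)\in W^s(x_k)$. Property \textbf{A2} of Statement \ref{st2} gives $\dist_s(x_k,y_k)\le(1+\mu)|z|\le(1+\mu)Ld$. Provided $d_0$ is small enough that $(1+\mu)Ld<\delta_0$, inequality (\ref{Add5.1}) yields $\dist_s(f(x_k),f(y_k))\le(1+\mu)Ld/\lam$; combining with (\ref{eq1}) and the pseudo-orbit condition $\dist(f(x_k),x_{k+1})\le d$ then produces
$$
\dist(f(y_k),x_{k+1})\le \frac{(1+\mu)Ld}{\lam}+d \; =:\; \sigma .
$$
Choosing $d_0$ so that $\sigma\le\delta$, Statement \ref{st1} produces a unique point $p=W^s_{L_0\sigma}(x_{k+1})\cap W^{cu}_{L_0\sigma}(f(y_k))$, which sits inside the neighborhoods required by (\ref{Add6.1}), so $h_k^s(z)=(\exp^s_{x_{k+1}})^{-1}(p)$ is defined.

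To bound $|h_k^s(z)|$ I would again apply property \textbf{A2} to the stable exponential at $x_{k+1}$ and combine it with $p\in W^s_{L_0\sigma}(x_{k+1})$:
$$
|h_k^s(z)| \le (1+\mu)\dist_s(x_{k+1},p)\le (1+\mu)L_0\sigma \le (1+\mu)L_0\!\left(\frac{(1+\mu)Ld}{\lam}+d\right).
$$
For the right-hand side to be $\le Ld$ it suffices that $L\ge(1+\mu)L_0/\bigl(1-(1+\mu)^2L_0/\lam\bigr)$, whose denominator is positive by the choice of $\mu$ in (\ref{eq2}); any $L$ exceeding this threshold works. With $\mu$ and $L$ now fixed, $d_0$ is chosen small enough that $(1+\mu)Ld<\delta_0$, $\sigma\le\delta$, and $Ld<\ep_0$ hold, so that both statements can legitimately be invoked.

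I expect the main obstacle to be purely the bookkeeping: selecting $\mu$, then $L$, then $d_0$ in the correct order and keeping track of the compound distortions from the $(1+\mu)$-near-isometries in Statement \ref{st2} together with the $L_0$-expansion of the transverse holonomy in Statement \ref{st1}. The essential dynamical content is the one-step stable contraction by $1/\lam$ from (\ref{Add5.1}), which, by the choice of the iterate $l$ in the reduction preceding the lemma, strictly dominates the $L_0$ factor and thereby closes the estimate.
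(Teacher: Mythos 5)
Your argument is correct and follows essentially the same route as the paper: apply \textbf{A2} at $x_k$, contract along $W^s$ via (\ref{Add5.1}), use the triangle inequality with the pseudo-orbit condition, invoke Statement \ref{st1} to get $p$, and apply \textbf{A2} at $x_{k+1}$, closing the loop with the same constant condition $L_0\bigl(1+L(1+\mu)/\lam\bigr)(1+\mu)\le L$, which is the paper's (\ref{eq5}), made possible by (\ref{eq2}). The only difference is cosmetic bookkeeping (your $\sigma$ versus the paper's inline bound $d(1+L(1+\mu)/\lam)$ and the explicit smallness requirements on $d_0$), so nothing further is needed.
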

\begin{proof}
Inequality \sref{eq2} implies that there exists $L >0$ such that
\begin{equation}\label{eq5}
L_0(1 +L(1+\mu)/\lam)(1+\mu)<L.
\end{equation}
Let us choose $d_0 < \delta_0/2L$. Fix $d < d_0$, $d$-pseudotrajectory
$\{x_k\}$, $k \in \ZZ$ and $z \in I_{Ld}^s(x_k)$.

Condition \textbf{A2} of Statement \ref{st2} implies that
$$
\dist_s(x_k, \exp^s_{x_k}(z)) \leq Ld(1+\mu).
$$
Inequality \sref{Add5.1} implies the following
$$
\dist_s(f(x_k), f(\exp^s_{x_k}(z))) \leq \frac{1}{\lam}Ld(1+\mu).
$$
Inequalities \sref{eq1} and $\dist(f(x_k), x_{k+1}) < d$ imply (see Fig. \ref{FigProof} for illustration)
\begin{multline*}
\dist(x_{k+1}, f(\exp^s_{x_k}(z))) \leq \dist(x_{k+1}, f(x_k)) + \dist(f(x_{k}), f(\exp^s_{x_k}(z))) \leq \\
d\left(1 + \frac{1}{\lam}L(1+\mu)\right) < Ld < \delta_0.
\end{multline*}

\begin{figure}[ht]
        \centering
        \includegraphics[width = 0.60\textwidth]{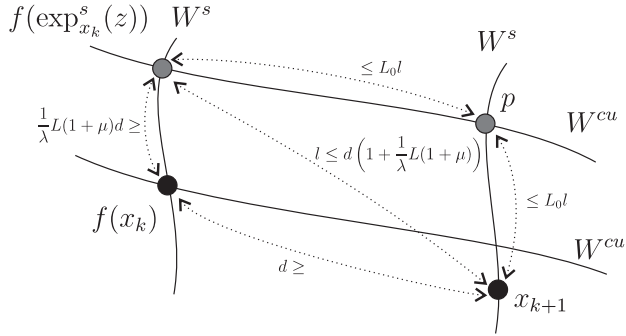}
        \caption{Illustration of the proof of Lemma \ref{lem1}}
        \label{FigProof}
\end{figure}

Statement \ref{st1} implies that point $p$ from relation \sref{Add6.1}
is well-defined and inequality \sref{eq5} implies the following
$$
\dist_s(p, x_{k+1}), \dist_{cu}(p, f(\exp^s_{x_{k}}(z))) < dL_0(1 + \frac{1}{\lam}L(1+\mu)) < \frac{Ld}{1+\mu}.
$$
This inequality and Statement \ref{st2} imply
\begin{equation}\label{eq16}
\dist_{cu}(f(\exp^s_{x_{k+1}}(z)), \exp^s_{x_k}(h_k^s(z))) < Ld,
\end{equation}
$$
|h_k^s(z)| < Ld,
$$
which completes the proof.

\end{proof}

Let $d_0, L> 0$ are constants provided by Lemma \ref{lem1}. Let
$d<d_0$ and $\{x_k\}$ is a $d$-pseudotrajectory.
Denote
$$
X^{s}=\prod_{k=-\infty}^\infty I_{Ld}^{s}(x_k).
$$
This set endowed with the Tikhonov product topology is
compact and convex.

Let us consider map $H: X^s \to X^s$ defined as following
$$
H(\{z_k\}) = \{z'_{k+1}\}, \quad \mbox{where} \quad z'_{k+1} = h_k^s(z_k).
$$
By Lemma \ref{lem1} this map is well-defined. Since $z'_{k+1}$
depends only on $z_k$ map $H$ is continuous.
Due to the Tikhonov-Schauder theorem \cite{tikhsch}, the mapping
$H$ has a (maybe non-unique) fixed point $\{z_k^*\}$. Denote $y_k^s =
\exp_{x_k}^s(z_k^*)$. Since $z_{k+1}^* = h^s_k(z_k^*)$, inequality
\sref{eq16} implies that
\begin{equation}\label{Add7.1}
y^s_{k+1} \in W^{cu}_{Ld}(f(y^s_k)) , \quad k \in \ZZ.
\end{equation}
Since $|z_k^*| < Ld$ we conclude
$$
\dist(x_k, y^s_k) \leq \dist_s(x_k, y^s_k) < (1+\mu)Ld< 2Ld, \quad k \in \ZZ.
$$

Similarly (decreasing $d_0$ and increasing $L$ if necessarily) one may show that there
exists a sequence $\{y^u_k \in W_{2Ld}^u(x_k)\}$ such that
$$
y^u_{k+1} \in W^{cs}_{Ld}(f(y^u_k)), \qquad k \in \ZZ.
$$
Hence $\dist(y^s_k, y^u_k) < \dist(y^s_k, x_k) + \dist(x_k, y^u_k) < 4Ld$. Decreasing $d_0$ if necessarily we can assume that $4L_0Ld < \delta_0$.
Then there exists an unique point $y_k = W^{cu}_{4L_0Ld}(y_k^s) \cap W^{s}_{4L_0Ld}(y_k^u)$ and inclusion \sref{Add7.1} implies that for all $k \in \ZZ$ the following holds
\begin{multline*}
\dist_{cu}(y_{k+1}, f(y_k)) <
\\
\dist_{cu}(y_{k+1}, y_{k+1}^s) + \dist_{cu}(y_{k+1}^s, f(y_k^s)) + \dist_{cu}(f(y_k^s), f(y_k)) < \\
4L_0Ld + Ld + 4RL_0Ld = L_{cu}d,
\end{multline*}
where $R = \sup_{x \in M} |\D f(x)|$ and $L_{cu} > 1$ do not depends on $d$.
Similarly for some constant $L_{cs} > 1$ the following inequalities hold
$$
\dist_{cs}(y_{k+1}, f(y_k)) < L_{cs}d, \quad k \in \ZZ.
$$
Reducing $d_0$ if necessarily we can assume that points $y_{k+1}$, $f(y_k)$ satisfy assumptions of condition \textbf{A4} of Statement \ref{st2}, hence
$$
\dist_c(y_{k+1}, f(y_k)) < (1+\mu)\max(L_{cs}, L_{cu})d, \quad k \in \ZZ
$$
and sequence $\{y_{k}\}$ is an $L_1d$-central pseudotrajectory with $$L_1 = (1+\mu)\max(L_{cs}, L_{cu}).$$

To complete the proof let us note that
$$
\dist(x_k, y_k)< \dist(x_k, y_k^s) + \dist(y_k^s, y_k) < 2Ld + 4L_0Ld, \quad k \in \ZZ.
$$

Taking ${\cal L} = \max(L_1, 2L + 4L_0)$ we conclude that $\{y_k\}$ is an
${\cal L}d$-central pseudotrajectory which ${\cal L}d$ shadows
$\{x_k\}$. $\square$

\begin{rem}\label{rem1}
Note that we do not claim uniqueness of such sequences $\{y_k^s\}$ and $\{y_k^u\}$.
In fact it is easy to show (we leave details to the reader)
that uniqueness of those sequences is equivalent to the plaque expansivity conjecture.
\end{rem}

\section{Acknowledgement} Sergey Kryzhevich was supported by the UK
Royal Society (joint project with Aberdeen University), by the Russian Federal Program "Scientific and
pedagogical cadres", grant no.  2010-1.1-111-128-033. Sergey
Tikhomirov was supported by the Humboldt postdoctoral fellowship for
postdoctoral researchers (Germany). Both the coauthors are grateful
to the Chebyshev Laboratory (Department of Mathematics and
Mechanics, Saint-Petersburg State University) for the support under
the grant 11.G34.31.0026 of the Government of the Russian
Federation.

\end{document}